\documentclass[10pt]{amsproc}

\usepackage[T2A]{fontenc}
 \usepackage[english]{babel}

\usepackage{amsmath}
\usepackage{amsfonts,amssymb}
\textwidth=13.9cm 
\textheight=21cm 
\hoffset -10mm
\voffset -7mm
\usepackage{mathrsfs}
\usepackage{graphicx}

\usepackage{tabularx}

\theoremstyle{plain} 
\newtheorem{lemma}{Lemma}
\newtheorem{theorem}{Theorem}

\newtheorem{corollary}{Corollary}

\theoremstyle{definition}
\newtheorem{remark}{Remark}



\usepackage[matrix,arrow,curve]{xy}
\usepackage{enumitem}

\newcounter{NN}\numberwithin{NN}{section}
\renewcommand{\theNN}{{\rm \arabic{NN}${}^o$}}
\def\nr{\refstepcounter{NN}{\theNN}}%
\newcommand{\OOO}{{\mathscr O}}
 
\newcommand{\PP}{{\mathbb P}} 
\newcommand{\QQ}{{\mathbb Q}} 
 
\newcommand{\ZZ}{{\mathbb Z}}
\newcommand{\Pic}{\operatorname{Pic}}
\newcommand{\rk}{\operatorname{rk}}

\newcommand{\h}{\operatorname{h}}

\newcommand{\Cl}{\operatorname{Cl}}

\begin{document}


\date{\\
\\}

\author{Yu. Prokhorov}\thanks{
This work is supported by the Russian Science Foundation under grant 
14-50-00005. 
}
\address{
 Steklov Mathematical Institute of Russian Academy of Sciences (Moscow)
}
\email{prokhoro@mi.ras.ru}

\title[On the number of singular points]{
On the number of singular points of factorial terminal Fano threefolds
}

\maketitle
\keywords{UDK: 512.7}




\keywords{{\it Keywords}: Fano variety, terminal singularity}

\section{Introduction}
Throughout this paper by $X$ we denote a Fano threefold with terminal 
$\QQ$-factorial singularities
and $\rk \Pic (X)=1$. 
Let $\iota=\iota (X)$ be its \textit {Fano index} and $g:=(- K_X)^3/2+1$ be its 
\textit {genus}. 
When $\iota (X) \ge 2$, more convenient invariant of a Fano threefold $X$ is its 
\textit{degree}
$d:=(- K_X)^3/\iota^3$. 
Recall that any $X$ as above has a one-parameter smoothing $\mathfrak X$ 
\cite{Namikawa-1997}. 
Let $\h^{1,2}(\mathfrak X_s)$ be the Hodge number of a general fiber of
$\mathfrak X$. 
Denote by $s(V)$ be the number of singular points of $V$. 

The upper bounds of $s(X)$ are interesting for 
classification problems, in particular, for the classification of finite 
subgroups
of the space Cremona group
(see., e.g., \cite{Prokhorov2009e}, \cite{Prokhorov-Shramov-J-const},
\cite{Prokhorov-Shramov-p-groups}, \cite{PrzyjalkowskiShramov2016}). 
Y. Namikawa in \cite{Namikawa-1997} proved the inequality
\[
s(X) \le 20- \rk \Pic (X)+\h^{1,2}(\mathfrak X_s),
\]
which holds for any Fano threefold $X$ with terminal
Gorenstein singularities. However, this estimate is quite rough. 
For Fano threefolds with \textit {non-degenerate} singularities it is 
known the inequality $s(X) \le \h^{1,2}(\mathfrak X_s)$
(see, e.g., \cite[\S 10]{Prokhorov-GFano-1}). 

\begin{theorem} [(cf. {\cite[Thm. 4.1]{Prokhorov-planes}})]
\label{main}
Suppose either 1) $\iota=1$ and $g \ge 7$ or 2)
$\iota=2$ and $d \ge 3$. Then $s(X) \le \h^{1,2}(\mathfrak X_s)$
and this bound is attained for some $X$ having only ordinary double 
points. 
\end{theorem}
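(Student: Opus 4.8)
The plan is to follow the strategy that works for smooth Fano threefolds and push it through the terminal $\QQ$-factorial setting. First I would use the hypotheses on $\iota$ and $g$ (resp. $d$) to produce a suitable embedding of $X$. In case 1) the condition $g\ge 7$ means $(-K_X)^3\ge 12$, and the anticanonical ring is generated in degree $1$ (this is known for $\QQ$-Gorenstein terminal Fano threefolds of large genus), so $X\subset \PP^{g+1}$ is projectively normal with $\OOO_X(1)=\OOO_X(-K_X)$. In case 2) one uses the half-anticanonical class $H$ with $-K_X=2H$ and $H^3=d\ge 3$ to embed $X$; for $d=3,4,5$ these are the classical del Pezzo threefolds (intersection of quadrics, etc.) and for $d\ge 3$ the linear system $|H|$ is very ample with the expected projective normality. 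In both cases the key structural input is that a general hyperplane section $S\in|\OOO_X(1)|$ (resp. $S\in|H|$) is a K3 surface with at worst Du Val singularities passing through the singular points of $X$, and a general such $S$ is \emph{smooth} by Bertini away from $\Sing(X)$ while near a terminal point of $X$ the general section is a smooth (or Du Val) K3 germ.

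The heart of the argument is a Lefschetz-type comparison. I would compare the Picard/class groups: since $\rk\Pic(X)=1$ and $X$ is $\QQ$-factorial, and a general $S$ is a K3 surface, the restriction gives $\Cl(X)\otimes\QQ\hookrightarrow \Pic(S)\otimes\QQ$, and by the Noether–Lefschetz property for sufficiently positive $S$ one expects $\rk\Pic(S)$ to be controlled. Then I would count: each singular point $p\in\Sing(X)$ is a terminal cyclic quotient (or more generally a terminal singularity whose general hyperplane section is Du Val), and a general $S$ through all of $\Sing(X)$ acquires a Du Val singularity at each $p$; minimally resolving $S$ introduces exceptional $(-2)$-curves which are independent in $\Pic(\tilde S)$, hence $s(X)\le \rk\Pic(\tilde S)-\rk(\text{other classes})$. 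Relating this to $\h^{1,2}(\mathfrak X_s)$ is done via the smoothing $\mathfrak X$: the general fiber $\mathfrak X_s$ is a smooth Fano threefold, for which $\h^{1,2}(\mathfrak X_s)$ equals (by Iskovskikh–Mori–Mukai classification, or by the Clemens–Schmid exact sequence / vanishing cycle count applied to the degeneration $\mathfrak X_s\rightsquigarrow X$) exactly the number of vanishing cycles, each ordinary double point contributing one. The inequality $s(X)\le\h^{1,2}(\mathfrak X_s)$ then follows from semicontinuity of Hodge numbers together with the fact that each node kills one $(2,1)$-class.

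For the sharpness statement, I would exhibit for each relevant value $(\iota,g)$ or $(\iota,d)$ a specific nodal model attaining the bound. Concretely: take a smooth $X'$ in the appropriate deformation family with $\h^{1,2}(X')=\h^{1,2}(\mathfrak X_s)$, and degenerate it to a nodal $X$ with exactly $\h^{1,2}(\mathfrak X_s)$ ordinary double points while keeping $\rk\Pic=1$ and $\QQ$-factoriality — for instance a generic hyperplane/quadric section construction, or a determinantal/Pfaffian degeneration whose singular locus is a prescribed number of nodes. One checks $\QQ$-factoriality of the nodal $X$ by a defect/codimension argument (the nodes impose independent conditions, so the defect vanishes), and checks that $s(X)=\h^{1,2}(\mathfrak X_s)$ by the Hodge-theoretic count above, which is now an equality because the vanishing cycles associated to the nodes are linearly independent in $H^3$. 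The main obstacle I anticipate is the $\QQ$-factoriality bookkeeping in the extremal examples: one must arrange the maximal number of nodes allowed by $\h^{1,2}$ while the small resolution fails to be projective (so that the defect is zero and $X$ stays $\QQ$-factorial with $\rk\Pic=1$), which is a delicate balance and is exactly why the cited Theorem 4.1 of \cite{Prokhorov-planes} and the present theorem require the lower bounds $g\ge 7$, $d\ge 3$ rather than holding for all indices.
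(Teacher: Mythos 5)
There is a genuine gap, and it sits exactly where the theorem goes beyond what was already known. Your central mechanism --- semicontinuity of Hodge numbers plus ``each node kills one $(2,1)$-class'' via Clemens--Schmid/vanishing cycles --- is precisely the argument that establishes $s(X)\le \h^{1,2}(\mathfrak X_s)$ for \emph{non-degenerate} singularities (ordinary double points), which the introduction records as already known. The content of the theorem is that the bound holds for arbitrary terminal $\QQ$-factorial singularities, including degenerate ones such as $cA_1^m$ with $m\ge 3$; for those the relation between the local Milnor data, the defect, and $\h^{1,2}$ does not reduce to ``one singular point, one vanishing cycle,'' and without further input one only recovers Namikawa's weaker estimate $s(X)\le 20-\rk\Pic(X)+\h^{1,2}(\mathfrak X_s)$. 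Your auxiliary K3 argument does not repair this: a general member of the very ample system $|-K_X|$ (or $|H|$) misses the finitely many singular points of $X$ altogether, and if you instead force $S$ to pass through all of $\Sing(X)$ you lose genericity and, in the best case, obtain only $s(X)\le \rk\Pic(\tilde S)-1\le 19$ from the $(-2)$-curves --- far weaker than the target bounds $\h^{1,2}=7$ (for $g=7$) or $\h^{1,2}=5$ (for $g=8$, $d=3$). The sharpness construction by ``degenerating to the right number of nodes while keeping $\QQ$-factoriality'' is also asserted rather than carried out.

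For contrast, the paper's proof is purely birational-geometric and makes no use of Hodge-theoretic semicontinuity for the inequality itself. One blows up a $cA_1$-point (or a line, or for $g=7$ a general conic), obtains a Sarkisov link $X\leftarrow Y\dashrightarrow Y'\to Z$ with $Z$ an explicit simpler variety ($\PP^3$, a quadric, a del Pezzo threefold, a conic bundle base, \dots) and $f'$ contracting a divisor to a curve $\Gamma$, and then counts singular points through the link: flops preserve analytic types of terminal singularities, so $s(X)\le s(Z)+s(\Gamma)+1\le s(Z)+p_a(\Gamma)+1$, where $p_a(\Gamma)$ and $s(Z)$ are read off from the explicit classification of the links. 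The cases without a $cA_1$-point are handled by showing (via the discriminant curve of an auxiliary conic bundle) that $s(X)\le 2$ there, and sharpness is obtained by reversing the link starting from a suitably singular pair $(Z,\Gamma)$. If you want to salvage your approach you would need either a local Hodge-theoretic statement controlling the contribution of a degenerate terminal point to $\h^{1,2}$, or the reduction to the $cA_1$/nodal case that the paper achieves by the link analysis.
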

Recall (see \cite{Iskovskikh-Prokhorov-1999}, see also the references in 
\cite{Przyjalkowski-Shramov-Hodge-2015}) that
$\h^{1,2}(\mathfrak X_s)$ takes the following values:
\medskip \noindent
\begin{center} \renewcommand {\arraystretch} {1.3}
\scalebox {0.85} {
\noindent
\begin{tabular} {lc||c|c|c|c|c|c|c|c|c|c}
$\iota=1$ & $g $ & 12 & 10 & 9 & 8 & 7 & 6 & 5 & 4 & 3 & 2
\\
\hline
& $\h^{1,2}$ & 0 & 2 & 3 & 5 & 7 & 10 & 14 & 20 & 30 & 52
\end{tabular} \quad
\begin{tabular} {lc||c|c|c|c|c}
$\iota=$ 2 & $d $ & 5 & 4 & 3 & 2 & 1
\\
\hline
& $\h^{1,2}$ & 0 & 2 & 5 & 10 & 21
\end{tabular}
}
\end{center}
\medskip

Note that in our theorem we do not assume that the singularities of $X$ are
non-degenerate. 
Moreover, our construction allows to get better bounds in
degenerate cases. 
If $\iota=1$, $g=6 $ or $\iota=d=2$, then under the additional 
assumption
that the variety $X$ has a $cA_1$-point,
our computations allow us to get the estimate $s(X) \le15$ (cf. \cite[proof of 
1.3]{Prokhorov-planes}). 
However, we do not claim that this bound is sharp. 

\textit {A Sarkisov link} is the following diagram
\begin{equation}
\label{equation-cd} 
\vcenter {
\xymatrix {&& Y \ar [dl]_f \ar [dr]_{\bar f} 
\ar @ {->} [rr]^{\chi} && Y \ar [dr]^{f'} \ar [dl]^{\bar f'} 
\\
& X && \bar Y && Z}} 
\end{equation} 
where $f$ and $f'$ are extremal Mori
contractions and $\chi$ is a flop. In our situation, the morphism $f$ is 
birational. Let $E$ be the exceptional divisor and $E'\subset Y'$ be its proper 
transform. If $f'$ is also birational, then we denote by $D'$ 
its exceptional divisor and $\Gamma:=f'(D')$ If, moreover, $\Gamma$ is 
curve, then it is irreducible, contained in the non-singular locus of $Z$, 
$f'$ 
is the blowup of $\Gamma$ and the singularities $\Gamma$ are planar (if there 
are any, see \cite{Cutkosky-1988}). 

\section {Birational transformations in singular points. }

Recall that a three-dimensional singularity $P\in X$ is 
\textit{of type $cA_1^m$}, if it is analytically equivalent to a hypersurface 
singularity 
$x_1x_2+x_3^2+x_4^m$. The blowup $f: Y \to X$ of the the maximal ideal 
$\mathfrak m_{P, X}$ of such a point is a Mori contraction (see 
\cite{Cutkosky-1988}). 
If the $m \le 2$, then $Y$ is smooth along $f^{-1}(P)$. 
If $m>2$, then $Y$ has on $f^{-1}(P)$ exactly one singular 
point and this point is of type $cA_1^{m-2}$.

Sarkisov links with centers at 
singular points $P \in X$ of type $cA_1$ appeared earlier in 
different papers (see, e.g., \cite{Cutrone-Marshburn}, 
\cite{Jahnke-Peternell-Radloff-II}, \cite[\S 
3]{Przhiyalkovskij-Cheltsov-Shramov-2015}, 
\cite{Takeuchi-2009}) but usually 
under some additional restrictions. My systematize and generalize this 
information: 

\begin{theorem} 
\label{cA1} 
Suppose that $X$ has a $cA_1$-point $P \in X$. Let $f: Y \to X$ be 
the blowup of $\mathfrak m_{P, X}$. 
If $\iota=2$, $d \ge 2$, then $f$ is included in the Sarkisov link 
\eqref{equation-cd}, where $\chi$ is an isomorphism and $f'$ is defined by the 
linear system $|\frac 12 (-K_{Y'}-E')|$. Except for the case \ref{d=2}, the 
morphism $f'$ contracts a divisor $D'$ to a curve $\Gamma$:
\par\medskip\noindent
\begin{tabularx}{\textwidth}{|c|c|X|c|c|}
\hline
\textnumero&d &\centering $Z$& $\deg \Gamma$& $p_a(\Gamma)$ 
\\ \hline
\nr \label{d=4} & $ 4$ & a smooth quadric in $\PP 
^4$ & $ 4$ & $ 1$\\
\nr \label{d=3} & $ 3$ & $\PP^3$ & $ 6 $ & $ 
4$\\\hline && \multicolumn {3} {c|} {} \\[- 9pt] 
\nr \label{d=2} & $ 
2$ & \multicolumn {3} {l|} {$\PP^2$, \qquad $f'$ is a conic bundle 
with discriminant curve $\Delta \subset \PP^2$, $\deg \Delta=6 
$} \\\hline 
\end{tabularx}

\par\medskip\noindent
If $\iota=1$, $g\ge 4$, then $f$ is included in the Sarkisov link 
\eqref{equation-cd}, 
where, except for the case \ref{g=4}, the morphism $f'$ is defined by the 
linear 
system $|-K_{Y'}-E'|$, and, except the cases \ref{g=6} and \ref{g=5}, $f'$ 
contracts 
a divisor 
$D'$ to a curve $\Gamma$: 

\par\medskip\noindent
\begin{tabularx}{\textwidth}{|c|c|X|c|c|}
\hline
\textnumero&g &\centering $Z$& $\deg \Gamma$& $p_a(\Gamma)$
\\\hline
\nr \label{g=10} & $ 10$ & a non-singular Fano threefold with $\iota=2$, $d 
=5$ & $ 6 $ & $ 1$\\
\nr \label{g=9} & $9$ & a $\QQ$-factorial Fano 
threefold with $\iota=2$, $d=4$ & $4$ & $ 0$
\\
\nr \label{g=8} & 
$ 8 $ & a smooth quadric in $\PP^4$ & $ 8 $ & $ 4$
\\
\nr \label{g=7} 
& $ 7$ & $\PP^3$ & $ 8 $ & $ 6 $
\\\hline && \multicolumn {3} {c|} {} 
\\[- 9pt] 
\nr \label{g=6} & $ 6 $ & \multicolumn {3} {l|} 
{$\PP^2$, \qquad $f'$ is a conic bundle with discriminant curve 
$\Delta \subset\PP^2$, $\deg \Delta=6 $} 
\\
\nr \label{g=5} & $ 5$ &\multicolumn {3} {l|} {$\PP^1$, \qquad $f'$ is 
a del Pezzo fibration of degree $ 4$} 
\\\hline && \multicolumn {3} {c|} {} 
\\[- 9pt] 
\nr \label{g=4} & $ 4$ & a $\QQ$-factorial Fano threefold with 
$\iota=1$, $g=4$ & $ 4$ & $ 0$ 
\\\hline 
\end{tabularx} 
\par \medskip \noindent
In the case \ref{g=4},\ $D'\in|-K_{Y'}-E'|$.
\end{theorem}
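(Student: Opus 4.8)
We run the two-ray game on $Y$. Since $\rk\Pic(Y)=\rk\Pic(X)+1=2$ and $f$ is a divisorial Mori contraction, $\overline{NE}(Y)$ is a two-dimensional cone; one extremal ray, say $R_f$, is contracted by $f$, and the task is to understand the other ray $R$. By \cite{Cutkosky-1988} the blowup $f$ has discrepancy $1$, so $-K_Y=f^*(-K_X)-E$; a computation in the completed local ring of $P\in X$, using that the exceptional divisor $E$ is a quadric surface in $\PP^3$ with $\OOO_Y(-E)|_E$ its hyperplane class, gives $E^3=2$, $E^2\cdot f^*H=E\cdot(f^*H)^2=0$ and $(f^*H)^3=H^3$, where $H$ is the ample generator of $\Pic(X)$. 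In particular $(-K_Y)^3=(-K_X)^3-2$, which equals $8d-2$ if $\iota=2$ and $2g-4$ if $\iota=1$.

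The decisive step is to prove that $-K_Y$ is nef and big, and even ample when $\iota=2$. For a curve $C\subset E$ we have $-K_Y\cdot C=(-E)\cdot C>0$, as $-E|_E$ is the hyperplane class of the quadric $E\subset\PP^3$. For the strict transform $\widetilde C$ of an irreducible curve $C\subset X$ we have $E\cdot\widetilde C=\operatorname{mult}_PC$, so $-K_Y\cdot\widetilde C=\iota\,(H\cdot C)-\operatorname{mult}_PC$; the estimate $\operatorname{mult}_PC\le H\cdot C$ — which holds because $g\ge4$ forces $|-K_X|$ to be very ample, and $d\ge2$ forces $|-\tfrac12 K_X|$ to be an embedding or a double cover — yields $-K_Y\cdot\widetilde C\ge(\iota-1)(H\cdot C)\ge0$, and $-K_Y\cdot\widetilde C=0$ forces $\iota=1$ with $C$ a line on $X$ through $P$. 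Hence $Y$ is a weak Fano threefold; $\bar f\colon Y\to\bar Y$ is the crepant contraction of these lines, which are flopping curves, and $\chi\colon Y\dashrightarrow Y'$ is the corresponding flop — an isomorphism when $\iota=2$, or when no line of $X$ passes through $P$. After the flop $Y'$ is again weak Fano with $\rk\Pic(Y')=2$; since $(-K_{Y'})^3=(-K_Y)^3>0$ the class $-K_{Y'}$ is big, so the ray of $Y'$ not contracted by $\bar f'\colon Y'\to\bar Y$ is $K_{Y'}$-negative, it is not of flipping type (the curves it contains, strict transforms of conics through $P$, sweep out at least a divisor), and it gives a Mori contraction $f'\colon Y'\to Z$ completing the diagram \eqref{equation-cd}.

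It remains to identify $f'$ and to read off the tables. The class $\tfrac12(-K_{Y'}-E')$ (for $\iota=2$), respectively $-K_{Y'}-E'$ (for $\iota=1$), corresponds on $Y$ to $f^*H-E$, respectively $f^*(-K_X)-2E$; after the flop it becomes semiample, vanishes exactly on $R$, hence equals $f'^*(\text{the ample generator of }Z)$. Its top self-intersection detects $\dim Z$: it is positive — so $Z$ is a threefold and $f'$ birational — except for $d=2$, $g=6$ (conic bundle over $\PP^2$) and $g=5$ (del Pezzo fibration over $\PP^1$). When $f'$ is birational, the curves of $R$ sweep out its exceptional divisor $D'$, and if $D'$ is contracted to a curve $\Gamma$ then, by \cite{Cutkosky-1988}, $f'$ is the blowup of $\Gamma$ with discrepancy $1$, where $\Gamma$ has planar singularities and lies in the smooth locus of $Z$; thus $-K_{Y'}=f'^*(-K_Z)-D'$, and $(-K_Z)^3$, $(-K_Z)\cdot\Gamma$, $p_a(\Gamma)$ are recovered from the $\chi$-invariant numbers $(-K_{Y'})^3$, $(-K_{Y'})^2\cdot f'^*H_Z$, $(-K_{Y'})\cdot(f'^*H_Z)^2$ (the flop corrections cancel because $-K_{Y'}$ is $\chi$-trivial) and the formula $(-K_{Y'})^3=(-K_Z)^3-2(-K_Z)\cdot\Gamma+2p_a(\Gamma)-2$. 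Matching $Z$ against the short classification list of Fano threefolds of the resulting index and degree then fixes every entry (and the analogous conic-bundle and del Pezzo identities produce $\deg\Delta=6$ and the degree $4$ of the fibration).

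The main obstacle is the nefness (and, for $\iota=2$, the ampleness) of $-K_Y$: one must exclude irreducible curves on $X$ whose multiplicity at the — possibly degenerate — $cA_1$ point $P$ exceeds $H\cdot C$, and this is precisely where $g\ge4$ and $d\ge2$ are used (for $g\le3$ or $d=1$ the relevant linear system is no longer base-point free or very ample, $-K_Y$ is not nef, and the link gains true flips). A secondary difficulty is the degenerate cases \ref{g=4}, \ref{g=5}, \ref{g=6}, \ref{d=2}: there the system $|-K_{Y'}-E'|$ (resp. $|\tfrac12(-K_{Y'}-E')|$) picks up $D'$ as a fixed component or drops in dimension, so $f'$ must be described by other means (in case \ref{g=4} only the containment $D'\in|-K_{Y'}-E'|$ remains), and one must check separately that $Z$ is $\QQ$-factorial Fano in cases \ref{g=9} and \ref{g=4} and smooth in the other birational cases.
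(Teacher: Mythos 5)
Your overall strategy coincides with the paper's: blow up $\mathfrak m_{P,X}$, show that the anticanonical map of $Y$ is a small crepant (flopping) contraction, flop, and identify the second extremal contraction $f'$ by a numerical matching in the style of \cite{Iskovskikh-Prokhorov-1999}. Your nefness computation $-K_Y\cdot\widetilde C=\iota\,(H\cdot C)-E\cdot\widetilde C\ge 0$ is just the intersection-theoretic form of the paper's argument via the projection from $P$ (the map given by $|f^*(-K_X)-E|=|-K_Y|$), and it rests on the same unproved assertion the paper makes, namely that only finitely many lines pass through $P$; likewise both texts defer the smoothness/$\QQ$-factoriality of $Z$ to the projection-from-a-line analysis. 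So the route is essentially the same.

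The one step that would fail as written is the parenthetical claim that ``the flop corrections cancel because $-K_{Y'}$ is $\chi$-trivial'' when you propose to compute $(-K_{Y'})\cdot(f'^*H_Z)^2$ (and, implicitly, $(f'^*H_Z)^3$) from data on $Y$. Only $(-K_{Y'})^3$ and $(-K_{Y'})^2\cdot E'$ are flop-invariant; since $f'^*H_Z=-K_{Y'}-E'$, the remaining quantities involve $(-K_{Y'})\cdot E'^2$ and $E'^3$, which change under $\chi$ by the (unknown) contribution of the flopped curves. Concretely, for $g=7$ one has $(f^*(-K_X)-2E)^3=(2g-2)-8E^3=12-16=-4$ on $Y$, whereas $(f'^*H_{\PP^3})^3=1$ on $Y'$, so the correction is nonzero and essential. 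The repair is exactly what the paper's phrase ``solving the corresponding Diophantine equations'' stands for: treat the flop defect as an extra unknown and use only the two flop-invariant numbers together with $\dim|-K_{Y'}-E'|\ge g-4$ (resp.\ $\dim|\tfrac12(-K_{Y'}-E')|\ge d$) and the integrality and positivity constraints on the invariants of $Z$ and $\Gamma$. Relatedly, your assertion that $-K_{Y'}-E'$ \emph{equals} $f'^*$ of the ample generator is an output of that analysis rather than an a priori fact, as the case $g=4$ (which you do flag) shows: there $-K_{Y'}-E'=D'$ is the $f'$-exceptional divisor itself.
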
 

\begin{proof} 
In all our cases, the linear system \mbox {$|-K_X|$} is very 
ample and defines an embedding $X \subset \PP^{g+1}$ as a variety of 
degree $2g- 2$. Consider the projection $\psi: X \dashrightarrow Y_0 \subset 
\PP^{g}$ from $P$. 
The blowup $f: Y \to X$ of the maximal ideal $\mathfrak m_{P, X}$ 
resolves the indeterminacy points of $\psi$ and gives a 
morphism $Y \to Y_0$. Note that the exceptional divisor $E \subset Y$ is
isomorphic to an irreducible quadric $Q \subset \PP^3$ and 
$\OOO_E (E) \simeq \OOO_Q (-1)$ (see
\cite{Cutkosky-1988}). It is easy to see that $K_Y=f^* K_X+E$. 
This shows that the morphism $Y\to Y_0$ is 
given by the anticanonical linear system and $-K_Y^3=-K_X^3-2>0$. Therefore, 
$\dim Y_0=3$ and in the Stein factorization $Y \to \bar Y \to Y_0$, the morphism 
$\bar f: Y\to\bar Y$ is birational and crepant. Since there are at most a finite number of lines
passing through $P$, the exceptional locus of the projection $\psi$ is 
at most one-dimensional. Thus, $\bar f: Y \to \bar Y$ is a small 
crepant contraction. Hence there exists a flop $\chi: Y \dashrightarrow Y'$ and 
an
extremal Mori contractions $f': Y' \to Z$. Using the restriction exact sequences to
$E \simeq Q \subset \PP^3$, it is easy to compute that 
$\dim|-K_{Y'}-E'|\ge \dim|-K_X|=g-4$ always and $|\frac 12 (-K_{Y'}-E')|\ge 
d$ for $\iota=2$. 
Using this and solving the 
corresponding Diophantine equations, as in \cite[ch. 4]{Iskovskikh-Prokhorov-1999} 
and \cite[\S 4]{Prokhorov-planes}, we obtain the 
possibilities \ref{d=4}-\ref{g=4}. The smoothness of $Z$ in the cases \ref{d=4}, 
\ref{g=10} and \ref{g=8} follows from $\QQ$-factoriality (see Corollary \ref{corollary-iota=2} below). 
If $\iota=2$, then the threefold $X$ does not contain lines (for the
anticanonical embedding). Therefore, the divisor $-K_Y$ is ample and $Y\simeq Y'$. 
This proves Theorem \ref{cA1}. 
\end{proof} 

\begin{remark} 
\label{remark-m} 
Let the pair $(Z, \Gamma)$ be such as in the cases 
\ref{d=4}, \ref{d=3}, \ref{g=10}-\ref{g=7}. Suppose that 

\begin{enumerate} 
\item \label{remark-m-1} 
the curve $\Gamma$ has planar singularities and is contained in 
the non-singular locus of $Z$, 

\item \label{remark-m-2} 
$\Gamma$ is a 
scheme-theoretic intersection of elements of the linear system $|\iota (Z) L|$, 
where $L$ is the ample generator of $\Pic(Z)\simeq \ZZ$, 

\item \label{remark-m-3} 
the number of $\iota(Z)$-secant lines of $\Gamma$ is at most finite. 
\end{enumerate} 
Then the blowup of $\Gamma$ can be
completed to a Sarkisov link with the corresponding Fano threefold $X$ (i.e. the construction
\eqref{equation-cd} can be reversed). 
Indeed, it follows from the condition \ref{remark-m-1} 
that $Y'$ has only terminal factorial singularities, \ref{remark-m-2} 
that $|-K_{Y'}|$ has no base points, and \ref{remark-m-3} guarantees that the 
corresponding morphism defines a crepant small contraction. The rest is similar to the 
proof of Theorem \ref{cA1}. 
\end{remark} 

\begin{corollary} \label{corollary-cA1} 
Suppose that in the conditions of Theorem \textup{\ref{main}}, the 
threefold $X$ has a point of type $cA_1$. Then 
$s(X)\le\h^{1,2}(\mathfrak X_s)$ and this bound is attained for some $X$ having 
only ordinary double points. 
\end{corollary}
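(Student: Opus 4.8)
The plan is to deduce Corollary~\ref{corollary-cA1} from Theorem~\ref{cA1} together with the invariance of the quantity $\h^{1,2}$ under the Sarkisov link, exactly as one proves Theorem~\ref{main} but now starting from the presence of a $cA_1$-point. First I would apply Theorem~\ref{cA1}: the hypotheses of Theorem~\ref{main} (namely $\iota=1$, $g\ge 7$, or $\iota=2$, $d\ge 3$) fall within the range $g\ge 4$ (resp. $d\ge 2$) covered by Theorem~\ref{cA1}, so the blowup $f\colon Y\to X$ of $\mathfrak m_{P,X}$ fits into the Sarkisov link \eqref{equation-cd} with $\chi$ an isomorphism when $\iota=2$, and in each of the relevant cases $f'\colon Y'\to Z$ is the blowup of a curve $\Gamma\subset Z$ with $Z$ one of the explicitly listed (non-singular, in these cases) Fano threefolds — namely case \ref{d=4} or \ref{d=3} when $\iota=2$, and case \ref{g=10}, \ref{g=9}, \ref{g=8}, or \ref{g=7} when $\iota=1$.

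Next I would track the singular points through the link. The blowup $f$ of the $cA_1$-point $P$ either resolves $P$ (if the point is of type $cA_1^m$ with $m\le 2$, i.e.\ an ordinary double point) or replaces it by a single $cA_1^{m-2}$-point on $f^{-1}(P)$, by the description of $cA_1^m$-blowups recalled at the start of \S2; all other singular points of $X$ are isomorphically carried onto $Y$. The flop $\chi$ preserves the analytic types and the number of singular points, and since $f'$ is the blowup of a \emph{smooth} curve $\Gamma$ contained in the non-singular locus of $Z$ (this is part of the conclusion of Theorem~\ref{cA1} in the cases at hand, and $p_a(\Gamma)$ is the arithmetic genus), $f'$ does not introduce or remove singular points. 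Hence $s(X)\le s(Z)+1$, and since $Z$ is one of the Fano threefolds to which Theorem~\ref{main} applies (note $d=4,3$ for $\iota(Z)=2$ and $g=10,9,8,7$ for $\iota(Z)=1$ all satisfy its hypotheses), induction on the genus/degree — the same inductive scheme used to prove Theorem~\ref{main}, with Theorem~\ref{main} itself serving as the statement being proved — gives $s(Z)\le\h^{1,2}(\mathfrak Z_s)$; comparing with the table and using the standard relation $\h^{1,2}(\mathfrak X_s)=\h^{1,2}(\mathfrak Z_s)+(\text{contribution of the link})$ yields $s(X)\le\h^{1,2}(\mathfrak X_s)$.

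For sharpness, I would reverse the construction via Remark~\ref{remark-m}: take $Z$ to be a \emph{smooth} Fano threefold of the appropriate type attaining $s(Z)=\h^{1,2}(\mathfrak Z_s)$ with only ordinary double points (which exists by the inductive hypothesis / Theorem~\ref{main}), and choose $\Gamma\subset Z$ a smooth curve of the prescribed degree and genus in the non-singular locus, scheme-theoretically cut out by $|\iota(Z)L|$ and with only finitely many $\iota(Z)$-secant lines; the blowup of $\Gamma$ then completes to a Sarkisov link producing an $X$ with a single $cA_1$-point (an ordinary double point, taking $m=2$) in addition to the nodes inherited from $Z$, so $s(X)=\h^{1,2}(\mathfrak X_s)$ and all singularities of $X$ are ordinary double points. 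One must check that a general such $\Gamma$ exists disjoint from the nodes of $Z$ and satisfies the genericity conditions of Remark~\ref{remark-m}; this is the kind of dimension count carried out in \cite{Prokhorov-planes} and poses no essential difficulty.

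The main obstacle is the bookkeeping of $\h^{1,2}$ across the link — one must verify that the difference $\h^{1,2}(\mathfrak X_s)-\h^{1,2}(\mathfrak Z_s)$ read off from the two tables is accounted for exactly by the blown-up curve $\Gamma$ (via its genus $p_a(\Gamma)$ and the change in Picard rank along the flop), so that the inequality $s(X)\le s(Z)+1$ upgrades to $s(X)\le\h^{1,2}(\mathfrak X_s)$ without loss; once this numerical match is confirmed case by case against the list \ref{d=4}, \ref{d=3}, \ref{g=10}--\ref{g=7}, the corollary follows.
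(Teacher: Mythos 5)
There is a genuine gap, and it is concentrated in one wrong assumption: you assert that in the relevant cases $f'$ is the blowup of a \emph{smooth} curve $\Gamma$, "part of the conclusion of Theorem~\ref{cA1}". It is not. Theorem~\ref{cA1} (and the discussion of Sarkisov links in the introduction) only guarantees that $\Gamma$ is irreducible, lies in the non-singular locus of $Z$, and has at worst \emph{planar} singularities; $p_a(\Gamma)$ is its arithmetic genus precisely because $\Gamma$ may well be singular. Each singular point of $\Gamma$ produces a singular (cDV) point of $Y'$ over it, so the correct count is the chain used in the paper,
\[
s(X)\le s(Y)+1=s(Y')+1=s(Z)+s(\Gamma)+1\le s(Z)+p_a(\Gamma)+1,
\]
where $s(\Gamma)\le p_a(\Gamma)$ because each planar singular point of the irreducible curve $\Gamma$ drops the geometric genus by at least one. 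Your intermediate claim $s(X)\le s(Z)+1$ is false in general (for $g=7$ it would give $s(X)\le 1$, while the true bound is $7$), and this forces you to invent a compensating "relation $\h^{1,2}(\mathfrak X_s)=\h^{1,2}(\mathfrak Z_s)+(\text{contribution of the link})$" together with an inductive scheme, neither of which is needed: one simply checks row by row that $s(Z)+p_a(\Gamma)+1$ equals the tabulated $\h^{1,2}(\mathfrak X_s)$ (with $s(Z)=0$ when $Z$ is smooth or $\PP^3$, and $s(Z)\le 2$ in case~\ref{g=9} by Corollary~\ref{corollary-iota=2}, where $\h^{1,2}=3$ and $p_a(\Gamma)=0$).

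The same error undermines your sharpness argument. Reversing the link via Remark~\ref{remark-m} with a \emph{smooth} $\Gamma$ produces an $X$ with only $s(Z)+1$ nodes, which does not attain $\h^{1,2}(\mathfrak X_s)$ except in the smallest cases. To attain the bound you must take $\Gamma$ to be a maximally degenerate (e.g.\ rational, nodal with exactly $p_a(\Gamma)$ nodes) curve of the prescribed degree and arithmetic genus in the smooth locus of a suitable $Z$, so that every node of $\Gamma$ contributes an ordinary double point of $Y'$ and hence of $X$. With that correction, the rest of your outline (applying Theorem~\ref{cA1}, tracking the $cA_1$-point through $f$, invariance of singularities under the flop, and Remark~\ref{remark-m} for the converse construction) is consistent with the paper's proof.
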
 

\begin{proof} 
Note that 
three-dimensional terminal flops preserve types of singularities. Therefore, in 
the 
notation of Theorem \ref{cA1} and \eqref{equation-cd} we have 
\begin{equation} 
\label{equation-n} 
s(X) \le s(Y)+1=s(Y')+1=s(Z)+s(\Gamma)+1 \le s(Z)+p_a (\Gamma)+1 
\end{equation} 
(see \cite[5.1(ii)]{Prokhorov-planes}). In all cases \ref{d=4}-\ref{g=7} we 
can choose $Z$ and $\Gamma$ so that equalities are attained. 
This proves Corollary \ref{corollary-cA1}. 
\end{proof} 

\section {Proof of Theorem \ref{main}: the case $g \neq 7$} 
\label{section-g-neq-7} 
For $\iota=1$, $g \ge 9$, 
Theorem \ref{main} was proved in \cite[1.3]{Prokhorov-planes}, and for
$\iota=1$, 
$g=8$ and $\iota=2$, $d=3$ the assertion follows directly from
Corollary \ref{corollary-cA1} and the following lemma.

\begin{lemma} 
\label{lemma-g=8} 
Assume either: 1) $\iota=1$ and $g=8$, or 2) 
$\iota=2$ and $d=3$. If $s(X)>2$, then $X$ has a singularity of 
type $cA_1$. 
\end{lemma}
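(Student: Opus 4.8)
The plan is to use the explicit model of $X$, combined with a general surface section and the classification of Du Val cubic surfaces, and to reduce the case $g=8$ to the cubic threefold. Recall first that a terminal Gorenstein point $P\in X$ is of type $cA_1$ exactly when the quadratic part of a local equation has rank $\ge 3$ — the case in which the blowup of $\mathfrak m_{P,X}$ has an \emph{irreducible} exceptional divisor (a quadric $Q\subset\PP^3$), as in the proof of Theorem~\ref{cA1}. In all other cases this rank is $\le 2$, and it is $\ge 1$ since a point of multiplicity $\ge 3$ is not terminal. Arguing by contradiction, assume none of the $s(X)\ge 3$ singular points $P_1,\dots,P_s$ is of type $cA_1$, i.e. each has tangent cone of rank $\le 2$.

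Suppose first $\iota=2$ and $d=3$, so $-K_X=2A$ and $X\subset\PP^4$ is a cubic threefold embedded by $|A|=|\OOO_X(1)|$. Fix three of the singular points, say $P_1,P_2,P_3$. A general hyperplane $H$ through them is general near each $P_i$, so by Bertini and Reid's theory of cDV points $S:=H\cap X$ is a cubic surface with Du Val singularities, one of the general-section type over each $P_i$ (and, since the base locus of the family of such hyperplanes meets $X$ only along the fixed plane section $\langle P_1,P_2,P_3\rangle\cap X$, $S$ has no further singularities). A point of rank $\le 2$ has no $A_1$ as general section — the section is $A_{\ge 2}$, $D_{\ge 4}$, or $E_{\ge 6}$ — while the total Milnor number of a Du Val cubic surface is at most $6$ (its minimal resolution is a weak del Pezzo surface of degree $3$, and the classes of the $(-2)$-curves span a root subsystem of $E_6$). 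With three singularities of Milnor number $\ge 2$ this forces $S=3A_2$; in particular $S$ has exactly three singular points, so $\langle P_1,P_2,P_3\rangle$ contains no other $P_i$ and $P_1,P_2,P_3$ are all of type $cA_2$. Applying this to every triple, \emph{all} the $P_i$ are of type $cA_2$, each with a reducible (two-plane) tangent cone. It then remains to show that a $\QQ$-factorial cubic threefold cannot carry $\ge 3$ points of type $cA_2$, and this is the crux: the natural way is to compute the defect $\delta(X)=\rk\Cl(X)-\rk\Pic(X)$ (which is $0$ since $X$ is $\QQ$-factorial) by the formula for a cDV cubic threefold extending the classical count $\delta=|\Sing X|-\rk(\text{evaluation on linear forms})$ for nodal cubics; each $cA_2$ point of the two-plane kind contributes a defect subscheme of length $\ge 2$, so the total defect scheme has length $\ge 6>5=\h^0(\PP^4,\OOO(1))$ and $\delta(X)\ge 1$, a contradiction. (Alternatively one produces a global non-$\QQ$-Cartier Weil divisor directly from the cone-over-a-plane-cubic structure of a tangent-hyperplane section at some $P_i$; or one argues via Namikawa's smoothing theory, where $\QQ$-factoriality makes the defect vanish, so $\sum_i c(P_i)=\h^{1,2}(\mathfrak X_s)-\h^{1,2}(X)\le 5$ for local contributions $c(P_i)\ge 1$, with equality at an ordinary double point and $c\ge 2$ at a rank-$\le 2$ point, whence $s(X)\le 2$.)

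Now let $\iota=1$, $g=8$, so $(-K_X)^3=14$ and $|-K_X|$ embeds $X$ as a threefold of degree $14$ in $\PP^9$. A general anticanonical section is a $K3$ surface, for which the Milnor-number bound above is too weak, so I reduce to the previous case by classical birational geometry. Take a general line $\ell\subset X$ disjoint from $\Sing X$; the blowup $\sigma\colon\widetilde X\to X$ of $\ell$ is an isomorphism over $\Sing X$, so $\widetilde X$ is a terminal weak Fano threefold carrying the same $\ge 3$ singular points, again of rank $\le 2$. Running the Sarkisov link through $\widetilde X$ — flops, then a divisorial contraction onto a cubic threefold $Z$ — the flops preserve the types of the singular points (as in the proof of Corollary~\ref{corollary-cA1}), and for a suitable $\ell$ the singular points stay off the divisor contracted by the link; hence $Z$ is a $\QQ$-factorial cubic threefold with $\rk\Pic(Z)=1$ having $\ge 3$ singular points of rank $\le 2$, contradicting the previous paragraph. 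This proves the lemma.

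The main obstacle is the exclusion step for the cubic: converting ``$\ge 3$ points of type $cA_2$'' into a genuine failure of $\QQ$-factoriality. This needs either the cDV version of the defect formula together with the fact that the local defect scheme at such a $cA_2$ point has length $\ge 2$, or an explicit non-$\QQ$-Cartier Weil divisor coming from the lines through the singular points; both require some care with the local structure at the $cA_2$ points. A secondary point is, in the case $g=8$, to verify that the general line $\ell$ can be chosen so that the singular points avoid the exceptional divisor of the divisorial contraction.
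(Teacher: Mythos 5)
There is a genuine gap, and it sits exactly where you flag it yourself: the ``crux'' of your cubic-threefold case is never actually proved. Your hyperplane-section argument (general section of a non-$cA_1$ cDV point is $A_{\ge 2}$, $D$ or $E$; total Milnor number of a Du Val cubic surface is at most $6$) only reduces the problem to excluding three $cA_2$-type points on a $\QQ$-factorial cubic, and for that exclusion you offer three alternative strategies (a cDV defect formula with local contributions of length $\ge 2$, an explicit non-$\QQ$-Cartier divisor, or Namikawa-type local contributions $c(P_i)\ge 2$), none of which is carried out; each requires a nontrivial local computation that is precisely the content of the lemma. There is also a secondary unverified point in that reduction: the hyperplanes you use are constrained to contain the plane $\langle P_1,P_2,P_3\rangle$, so Bertini does not immediately give that the section at each $P_i$ is Du Val of the \emph{general}-section type. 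Finally, your $g=8$ reduction rests on a Sarkisov link ``blow up a general line, flop, contract onto a cubic threefold,'' but the two-ray game starting from a line on a genus-$8$ threefold is deterministic and, as the paper itself records (citing \cite[4.1]{Prokhorov-planes}), it terminates in a conic bundle over $\PP^2$ with quintic discriminant, not in a divisorial contraction onto a cubic; even if some link to a cubic existed, you would still need to check that the three bad points survive into $Z$ with their types intact, which you only note as a task.

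For comparison, the paper's proof avoids all of this by using that conic-bundle link uniformly in both cases: the (double) projection from a line in the smooth locus exhibits $Y'$, with the same singularities as $X$, as a conic bundle over $\PP^2$ with discriminant $\Delta$ of degree $5$, and a short local computation in $\PP^2\times U$ shows that every singular point of $Y'$ that is \emph{not} of type $cA_1$ forces a point of multiplicity $\ge 3$ on $\Delta$, with distinct singular points lying over distinct such points. Since a plane quintic has at most two triple points, $s(X)\le 2$. Your rank characterization of $cA_1$ points is the right starting intuition, but the passage from ``degenerate quadratic part'' to a concrete numerical contradiction is what is missing, and the discriminant curve supplies it in two lines.
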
 

\begin{proof} 
In the case $g=8$, according to \cite[4.1]{Prokhorov-planes}, the
double projection from a line contained in the 
non-singular locus determines a Sarkisov link, where $f'$ is a conic bundle 
over $\PP^2$ with discriminant curve $\Delta$ of degree $5$. Similarly, for a 
cubic hypersurface with $\QQ$-factorial terminal singularities the usual 
projection from a line defines a Sarkisov link, where $f'$ is conic bundle 
over $\PP^2$ with discriminant curve $\Delta$ degree $5$ (in this case $\chi$ is isomorphism). 
Moreover, $Y'$ has the same collection of singularities as $X$. 
Assume that $s(X)>2$ and all the singularities of $X$ are worse than $cA_1$. 
Let $R\in Y'$ be a singular point and $o:=f'(R)$. 

We claim that $R$ is the only singular point of $Y'$ lying on the fiber $f:={f'}^{-1}(o)$ 
and the singularity of the curve $\Delta$ at $o$ is of multiplicity $\ge 3$. 
Let $U \subset Z=\PP^2$ be a small analytic neighborhood of $o$ and 
$Y'_U:={f'}^{-1}(U)$. Then $Y_U'$ can be embedded to $\PP^2_{x_0, 
x_1, x_2} \times U_{u_1, u_2}$ and defined there by a homogeneous equation of degree 
$2$ with respect to the variables $x_0, x_1, x_2$. We may assume that $R$ 
has coordinates $(0: 0: 1; 0,0)$. 

If $F$ is  a reducible conic, then the 
equation of $Y'_U$ can be written in the form  $x_0x_1+\alpha (u_1, 
u_2) x_2^2=0$, where $\alpha \in \mathfrak m_{o, U}^3$. In this 
case $R$ is the only singular point of $Y'_U$ and the discriminant curve 
$\Delta=\{\alpha=0\}$ has at $o$ a singularity of multiplicity $\ge 3$. 

If $F$ is a double line, then the equation of $Y'_U$ can be written in the form 
$x_0 ^2+\alpha x_1^2+\beta x_1x_2+\gamma x_2^2=0$, where 
$\alpha \in \mathfrak m_{o, U}$, $\beta, \gamma \in \mathfrak m_{o, U}^2$. 
Since singularities of $Y'_U$ are isolated, we have $\alpha \notin \mathfrak m_{o, U}^2$. 
This means that $R$ is the only singular point of $Y'_U$. The discriminant 
curve $\Delta$ is given by $\beta^2=4 \alpha \gamma $ and again has at $o$ 
a singularity of multiplicity $\ge 3$. This proves our assertion. 

Thus, $s(Y')$ is less or equal to the number of points of multiplicity $\ge 3$ 
of the curve $\Delta$. On the other hand, a plane (possibly reducible) curve of 
degree 5 has at most two triple points. Lemma \ref{lemma-g=8} is proved. 
\end{proof} 

It remains to consider the case $\iota=2$, $d \ge 4$. The 
following lemma is well known in the non-singular case (see, e.g., 
\cite{Iskovskikh-Prokhorov-1999}). A singular case is similar. 

\begin{lemma} 
Let $X=X_d \subset \PP^{d+1}$ be a Fano threefold with $\iota=2$, $d \ge 4$ and 
let $l$ be a line contained in the non-singular locus of $X$. Then the
projection from $l$ determines a Sarkisov link, where $\chi$
is an isomorphism. Moreover, 

\begin{enumerate} 
\item 
if $d=4$, then $Z \simeq \PP^3$, and $f'$ is the blowup of an (irreducible) curve 
of degree $5$ and arithmetic genus $2$ lying on a quadric; 
\item 
if $d=5$, then $Z$ is smooth quadric in $\PP^4$ and $f'$ is the blowup of a smooth 
rational 
cubic curve lying on a hyperplane section.
\end{enumerate} 
\end{lemma}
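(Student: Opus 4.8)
The plan is to imitate the proof of Theorem~\ref{cA1}, replacing the blowup of a singular point by the projection from the line $l$. First I would note that under the hypotheses the anticanonical linear system $|-K_X|$ embeds $X=X_d\subset\PP^{d+1}$ as an intersection of quadrics for $d=4$ (a codimension-$2$ variety of degree $8$) and $d=5$ (a codimension-$1$ section, i.e. inside $\PP^6$, the Grassmannian-type del Pezzo threefold); since $l$ lies in the smooth locus, the blowup $f\colon Y\to X$ of $l$ is a well-understood Mori contraction with exceptional divisor $E$ a ruled surface over $l\cong\PP^1$ and $K_Y=f^*K_X+E$. The projection $\psi\colon X\dashrightarrow Z_0\subset\PP^{d-1}$ from $l$ is resolved by $f$, giving a morphism $Y\to Z_0$ which is anticanonical because $|-K_Y|=|f^*(-K_X)-E|$ is exactly the pullback of the hyperplanes through $l$. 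Then $-K_Y^3=-K_X^3-\deg(\text{something})$ computed via $E^3$, $E^2\cdot f^*(-K_X)$, etc., stays positive, so $\dim Z_0=3$ and Stein factorization $Y\to\bar Y\to Z_0$ gives a small crepant $\bar f\colon Y\to\bar Y$ whose flop $\chi$ produces $Y'$ with an extremal contraction $f'\colon Y'\to Z$. Because $\iota=2$ the threefold $X$ contains no $(-K_X)$-lines, so $-K_Y$ is already ample and $\chi$ is an isomorphism.

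Next I would pin down the target $Z$ and the curve $\Gamma$ by the same Diophantine bookkeeping used in \cite[ch.~4]{Iskovskikh-Prokhorov-1999} and in the proof of Theorem~\ref{cA1}: compute the numerical invariants of $Z$ from $-K_{Y'}^3$, the Picard rank, and the class of $-K_{Y'}-E'$, eliminating all cases except $Z\simeq\PP^3$ (for $d=4$) and $Z$ a smooth quadric in $\PP^4$ (for $d=5$). The $\QQ$-factoriality of $X$ forces $\QQ$-factoriality of $Z$ and then smoothness of $Z$ in these two cases, exactly as in Corollary~\ref{corollary-iota=2} invoked in Theorem~\ref{cA1}. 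For $d=4$ one finds $f'$ is the blowup of a curve $\Gamma\subset\PP^3$ with $\deg\Gamma=5$; its arithmetic genus follows from the double-point formula / the adjunction on $Y'$, giving $p_a(\Gamma)=2$, and counting sections shows $\Gamma$ lies on a (unique) quadric. For $d=5$ one finds $f'$ is the blowup of a curve $\Gamma$ of degree $3$ on the quadric $Z$ with $p_a(\Gamma)=0$, i.e. a twisted cubic, forced to lie in a hyperplane section. Irreducibility of $\Gamma$ follows because $f'$ is an extremal contraction, hence $\Gamma=f'(D')$ is irreducible, and the singularities of $\Gamma$ are planar by \cite{Cutkosky-1988} as recorded in the introduction; in these low-degree cases a degree-$3$ curve of genus $0$ and a degree-$5$ curve of genus $2$ on a quadric are automatically smooth.

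The main obstacle I anticipate is the same one that is glossed over in the proof of Theorem~\ref{cA1}: carrying out the elimination of the spurious numerical cases and verifying that the only surviving solutions are the two listed, including checking that $f'$ really is a \emph{divisorial} contraction to a curve (not to a point, and not a fibration) and that the curve has the asserted degree and arithmetic genus rather than merely satisfying the numerical constraints. Since the statement already advertises that the nonsingular case is classical and the singular case is ``similar,'' I would handle the singular case by pointing out that all the intersection-number computations are insensitive to the terminal $\QQ$-factorial singularities (they are computed on the smooth locus, or via the smoothing, or via the flat family), that terminal flops preserve the type and number of singularities, and that the only genuinely new input is the $\QQ$-factoriality argument for smoothness of $Z$, which is already available. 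Thus the proof reduces to the citation plus the three ingredients — resolution of $\psi$ by $f$, the Diophantine classification, and the $\QQ$-factoriality smoothness criterion — and I would write it as a short paragraph saying exactly that, with the numerical identities $-K_Y^3=-K_X^3-\,(\text{correction})$ and $\dim|-K_{Y'}-E'|\ge d-?$ spelled out only to the extent needed to rule out the alternatives.
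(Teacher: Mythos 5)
The paper itself offers no written proof of this lemma: it is dispatched with the remark that the statement is classical in the smooth case (\cite{Iskovskikh-Prokhorov-1999}) and that the singular case is similar, so your plan of adapting the point-projection argument of Theorem~\ref{cA1} is in the right spirit. However, as written your proposal contains a step that would fail, caused by conflating the $H$-embedding with the anticanonical one. For $\iota=2$ the embedding $X_d\subset\PP^{d+1}$ is given by $|H|$ with $-K_X=2H$ (so, e.g., $X_4\subset\PP^5$ has degree $4$ and $X_5\subset\PP^6$ has codimension $3$, not the degrees/codimensions you quote), and the resolved projection from $l$ is given by $|f^*H-E|=|\tfrac12(-K_Y-E)|$, \emph{not} by $|-K_Y|=|2f^*H-E|$. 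Consequently the morphism $Y\to\PP^{d-1}$ is not ``anticanonical,'' and its Stein factorization does not produce a small crepant contraction: this morphism contracts the \emph{divisor} swept out by the lines of $X$ meeting $l$ (each such line $l_1$ has $(f^*H-E)\cdot l_1'=1-1=0$), and it is precisely the second extremal contraction $f'$ of the link, with that divisor equal to $D'$ and its image equal to $\Gamma$. This is a structural difference from the point-projection case of Theorem~\ref{cA1}, where the projection really does give the anticanonical model $\bar Y$ sitting in the middle of \eqref{equation-cd}.

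The correct skeleton is therefore: blow up $l$, check that $-K_Y=2f^*H-E$ is nef and big and in fact ample (a $K_Y$-trivial curve would be the proper transform of a curve $C\subset X$ with $2\deg_H C=E\cdot C'$, i.e.\ meeting $l$ in $2\deg_H C$ points, which is impossible since $C$ and $l$ lie on a hyperplane section of degree $d$); ampleness of $-K_Y$ is what makes $\chi$ an isomorphism and identifies the resolved projection with $f'$. Then the numerical identification of $(Z,\Gamma)$ proceeds as you indicate, e.g.\ $(-K_Y)^3=(-K_X)^3-2(-K_X)\cdot l-2=8d-6$ must match $(-K_Z)^3-4\deg_{H_Z}\Gamma-2+2p_a(\Gamma)$, which is consistent with $(\PP^3,\deg 5,p_a=2)$ for $d=4$ and $(Q\subset\PP^4,\deg 3,p_a=0)$ for $d=5$. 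Your remaining points --- that the intersection-theoretic computations are unaffected by the terminal $\QQ$-factorial singularities away from $l$, and that smoothness of $Z$ for $d=5$ comes from $\QQ$-factoriality as in Corollary~\ref{corollary-iota=2} --- are fine, but the misidentification of the linear system driving the projection needs to be repaired before the argument goes through.
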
 

\begin{corollary} 
\label{corollary-iota=2} 
If $\iota=2$, $d=5$, 
then $X$ is smooth. Let $\iota=2$, $d=4$. Then $s(X) \le 2$ and if 
$s(X)=2$, then both singularities of $X$  are of type $cA_1$. 
\end{corollary}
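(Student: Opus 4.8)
The plan is to derive both assertions from the preceding lemma by projecting $X$ from a suitably chosen line. First I would note that $X=X_d\subset\PP^{d+1}$ ($d\ge 4$) carries a positive-dimensional family of lines; since $\Sing X$ is finite (terminal singularities being isolated), a general line $l$ lies in the non-singular locus $X_{\mathrm{sm}}$. Applying the foregoing lemma to $l$ yields a Sarkisov link \eqref{equation-cd} with $\chi$ an isomorphism, so $Y'\simeq Y$. Moreover $f\colon Y\to X$ is the blowup of the smooth curve $l\subset X_{\mathrm{sm}}$, hence an isomorphism over $X\setminus l$ and smooth along $E$; therefore $s(X)=s(Y)=s(Y')$, with a bijection of the singular sets respecting analytic types.

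For $d=5$ the lemma gives that $f'\colon Y'\to Z$ is the blowup of a smooth rational cubic curve on a hyperplane section of the smooth quadric $Z$. The blowup of a smooth curve in a smooth threefold is smooth, so $Y'\simeq Y$ is smooth; hence $s(X)=0$ and $X$ is smooth.

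For $d=4$ the lemma gives that $f'\colon Y'\to Z=\PP^3$ is the blowup of an irreducible curve $\Gamma$ with $\deg\Gamma=5$, $p_a(\Gamma)=2$, lying on a quadric; in particular $\Gamma$ has only planar singularities and lies in $Z_{\mathrm{sm}}$. By \cite[5.1(ii)]{Prokhorov-planes} (cf.\ \eqref{equation-n}), $s(Y')=s(Z)+s(\Gamma)=s(\Gamma)$, and for an irreducible curve with planar singularities $s(\Gamma)\le p_a(\Gamma)=2$ because $p_a(\Gamma)=p_g(\widetilde\Gamma)+\sum_{P\in\Sing\Gamma}\delta_P$ with each $\delta_P\ge 1$. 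Thus $s(X)=s(\Gamma)\le 2$. If $s(X)=2$, then $\Gamma$ has exactly two singular points, each with $\delta_P=1$, i.e.\ $\Gamma$ has a node or a cusp at each of them; blowing up the ideal $(z,xy)$, resp.\ $(z,y^2-x^3)$, of such a curve in $\CC^3$ exhibits the corresponding point of $Y'$ as a singularity $x_1x_2+x_3^2+x_4^2$ (an ordinary double point), resp.\ $x_1x_2+x_3^2+x_4^3$ --- in either case a $cA_1$-point. Since the link respects analytic types, both singular points of $X$ are of type $cA_1$.

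The step requiring most care is keeping track of how the two blowups change the singular locus: that $f$ (blowup of a line in $X_{\mathrm{sm}}$) neither creates nor destroys singularities, that $\chi$ is here an isomorphism, and, above all, that $f'$ (blowup of $\Gamma$) contributes over each singular point of $\Gamma$ exactly one singularity of $Y'$, of the asserted type --- this last point is precisely where \cite[5.1(ii)]{Prokhorov-planes} and the local computation do the work. One also needs to know that a line can be chosen inside $X_{\mathrm{sm}}$, which is standard given the finiteness of $\Sing X$ together with the existence of a positive-dimensional family of lines on $X$.
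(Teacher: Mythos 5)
Your proof is correct and is exactly the argument the paper intends: the corollary is stated as an immediate consequence of the preceding lemma, deduced by choosing a line in the smooth locus, noting that the blowup of that line and the (here trivial) flop preserve the singularities, and then reading off $s(Y')$ from the blowup of $\Gamma\subset Z$ (smooth for $d=5$; at most $p_a(\Gamma)=2$ planar singular points for $d=4$, each yielding a $cA_1$-point). Your local computations at the nodes/cusps and the $\delta$-invariant bound just make explicit what the paper leaves implicit.
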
 

\section {The case $g=7$} 

\begin{theorem} 
\label{theorem-conic} 
Let $\iota=1$, $g=7$ and let $C \subset X$ be a sufficiently general conic. 
Then there exists a Sarkisov link \eqref{equation-cd} with center $C$ and
there are two possibilities: 
\begin{enumerate} 
\item \label{theorem-conic-1} 
$Z \subset \PP^4$ is a smooth quadric, $f'$ is the blowup of a curve 
$\Gamma\subset Z$ with
$\deg\Gamma=10$, $p_a(\Gamma)=7$.
\item \label{theorem-conic-2} 
$Z\subset\PP^4$ is a cubic and $f'$ is the blowup of 
a smooth rational curve $\Gamma \subset Z$ of degree $4$.
\end{enumerate} 
\end{theorem}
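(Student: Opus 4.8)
The plan is to follow the pattern of the proof of Theorem~\ref{cA1}, replacing the blow-up of a point by the blow-up of the conic $C$, and then to read off the two outputs from the corresponding Diophantine equations, as in \cite[ch.~4]{Iskovskikh-Prokhorov-1999} and \cite[\S4]{Prokhorov-planes}. \emph{Set-up.} For $\iota=1$, $g=7$ the system $|-K_X|$ is very ample and realizes $X\subset\PP^8$ as a threefold of degree $12$ cut out by quadrics. A sufficiently general conic $C$ may be assumed smooth, disjoint from $\Sing X$, with $N_{C/X}\simeq\OOO\oplus\OOO$ (its degree equals $-K_X\cdot C+2p_a(C)-2=0$), and spanning a plane $\Pi:=\langle C\rangle\simeq\PP^2\not\subset X$; restricting the quadrics defining $X$ to $\Pi$ shows $X\cap\Pi=C$ scheme-theoretically, so a hyperplane section of $X$ contains $C$ if and only if the hyperplane contains $\Pi$. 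Let $f\colon Y\to X$ be the blow-up of $C$, with exceptional divisor $E\simeq\PP^1\times\PP^1$; then $Y$ has the same (terminal $\QQ$-factorial) singularities as $X$, $\rk\Pic(Y)=2$, the morphism $f$ is an extremal divisorial Mori contraction, and $K_Y=f^*K_X+E$, hence $-K_Y=f^*(-K_X)-E$. Using the restriction sequences along $E$ one computes
\[
(-K_Y)^3=(-K_X)^3-6=6,\qquad (-K_Y)^2\cdot E=4,\qquad (-K_Y)\cdot E^2=-2,
\]
and $\dim|-K_Y|=\dim|\OOO_X(-K_X)\otimes\mathcal I_C|=\dim|\OOO_{\PP^8}(1)\otimes\mathcal I_\Pi|=5$.

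\emph{The small crepant contraction and the link.} The anticanonical map of $Y$ is precisely the resolution of the projection $\psi\colon X\dashrightarrow\PP^5$ from $\Pi$, because the indeterminacy locus of $\psi$ is $X\cap\Pi=C$; in particular $|-K_Y|$ is base-point free. Let $Y\to\bar Y\to Y_0\subset\PP^5$ be the Stein factorization of the induced morphism. Since $(-K_Y)^3=6>0$ we have $\dim Y_0=3$, so $\bar f\colon Y\to\bar Y$ is birational; it is crepant since it contracts curves with $-K_Y\cdot\gamma=0$, and its exceptional curves project under $f$ onto lines or conics of $X$ meeting $C$, a locus that is at most one-dimensional for general $C$. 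Thus $\bar f$ is a small crepant contraction, $\bar Y$ is a terminal Gorenstein Fano threefold with $\rk\Pic=1$, and there is a flop $\chi\colon Y\dashrightarrow Y'$ followed by a second extremal contraction $f'\colon Y'\to Z$. On $Y'$ the divisor $-K_{Y'}$ is nef and big, trivial on the flopped curves and hence strictly positive on the other extremal ray; so $f'$ is $K_{Y'}$-negative, and since $-K_{Y'}$ is not ample, $Z$ is not a point. This produces the Sarkisov link \eqref{equation-cd} with centre $C$.

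\emph{Diophantine analysis.} Let $E'\subset Y'$ be the strict transform of $E$. The products $(-K_{Y'})^3=6$, $(-K_{Y'})^2\cdot E'=4$, $(-K_{Y'})\cdot E'^2=-2$ coincide with their counterparts on $Y$, because every term in the flop-correction formula for these products contains a factor $-K_{Y'}\cdot(\text{flopped curve})=0$; together with $\dim|-K_{Y'}|=5$ they constrain $f'$. Running through the $K_{Y'}$-negative extremal contractions as in \cite[ch.~4]{Iskovskikh-Prokhorov-1999} and \cite[\S4]{Prokhorov-planes}, one excludes the conic-bundle, del Pezzo fibration and divisorial-to-point cases; and for a divisorial contraction onto a curve $\Gamma$ lying in the nonsingular locus of a Fano threefold $Z$ with $\rk\Pic(Z)=1$, index $\iota(Z)$, ample generator $H_Z$ and degree $d(Z)=(-K_Z)^3/\iota(Z)^3$, one has $K_{Y'}=f'^*K_Z+D'$, so that, writing $E'\equiv m\,f'^*H_Z-aD'$, the three relations become
\[
\iota(Z)^3 d(Z)-2\,\iota(Z)\deg\Gamma+2p_a(\Gamma)=8
\]
together with two further integral equations in $\iota(Z),d(Z),\deg\Gamma,p_a(\Gamma),m,a$. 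Their only solutions with $Z$ a genuine Fano threefold, $\deg\Gamma\ge1$, $p_a(\Gamma)\ge0$, $m\ge1$, $a\ge0$ are: $\iota(Z)=3$, $d(Z)=2$ (so $Z$ is the quadric threefold $Q^3\subset\PP^4$), $\deg\Gamma=10$, $p_a(\Gamma)=7$, $(m,a)=(5,2)$; and $\iota(Z)=2$, $d(Z)=3$ (so $Z$ is a cubic threefold in $\PP^4$), $\deg\Gamma=4$, $p_a(\Gamma)=0$, $(m,a)=(3,2)$. In the first case $\QQ$-factoriality forces the quadric $Z$ to be smooth (as in Theorem~\ref{cA1} and Corollary~\ref{corollary-iota=2}); in both cases $\Gamma$ is irreducible with planar singularities, so $f'$ is the blow-up of $\Gamma$ by \cite{Cutkosky-1988} (cf.\ Remark~\ref{remark-m}), and since $p_a(\Gamma)=0$ in the cubic case, $\Gamma$ is a smooth rational quartic there. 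This gives exactly the two cases \ref{theorem-conic-1} and \ref{theorem-conic-2}.

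\emph{Main difficulty.} The crucial point is the geometric claim in the second paragraph: that for a general conic $C$ only finitely many lines and conics of $X$ meet $C$, so that $\bar f$ contracts at most a curve and is a genuine small contraction — equivalently, that a general conic of $X$ does not sweep out a ruled or conic-ruled surface. This is where the word "sufficiently general" is essential; once it is in place, the remainder is the standard Sarkisov-link bookkeeping, the only real work being the solution of the Diophantine system in the third paragraph and the verification that the two solutions above are the only ones.
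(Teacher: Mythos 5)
Your overall architecture (blow up $C$, anticanonical/projection morphism, Stein factorization, flop, second extremal contraction, Diophantine bookkeeping) matches the paper, and your numerical computations $(-K_Y)^3=6$, $(-K_Y)^2\cdot E=4$, $\dim|-K_{Y'}|=5$ and the resulting two solutions are correct. But there is a genuine gap exactly where you yourself locate the ``main difficulty'': you assert, rather than prove, that $\bar f$ is small, and you also assert without proof that a general conic avoids $\Sing X$. Neither is a soft genericity statement in this singular setting, and the paper spends two lemmas on them. For the smallness: the curves contracted by $\bar f$ are those with $-K_Y\cdot\gamma=0$, i.e.\ (off $E$) lines meeting $C$ once, conics meeting $C$ twice, etc.; it is \emph{not} true that ``the locus of lines and conics meeting $C$ is one-dimensional'' — conics meeting $C$ in one point sweep out an open set, and what must be excluded is a one-parameter family of \emph{bisecant} conics. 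The paper does this by supposing a divisor $D$ is contracted, computing $D\sim\alpha\bigl(2(-K_Y)-3E\bigr)$ and $D\cdot E\cdot(-K_Y)=14\alpha$, showing a general fiber of $D$ would be a conic meeting $C$ at two points, and then deriving a contradiction from the geometry of $f_0(E)$: either $f_0|_E$ is birational and $f_0(E)$ is a degree-$4$ ruled surface singular along a curve of degree $7\alpha\ge 7$ (impossible), or $f_0$ has degree $2$ and $Y_0$ is a cone over a twisted cubic, which contains no quadric surface. Nothing in your write-up substitutes for this argument, and ``sufficiently general'' alone does not supply it.

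The second omission is the existence of a non-degenerate conic contained in the non-singular locus of $X$ (and not lying on the surface swept out by lines). Since $X$ is singular, one must rule out that every conic of the two-dimensional family passes through a singular point; the paper does this by taking a line $l$ avoiding that point, passing to the degree-$5$ del Pezzo fibration obtained by double projection from $l$, and observing that otherwise a fiber would be a cone in $\PP^5$, contradicting the bound $\le 4$ on tangent-space dimensions at hypersurface singularities. Your ``may be assumed smooth, disjoint from $\Sing X$'' skips this entirely. A smaller point: the normal bundle of $C$ need not be $\OOO\oplus\OOO$ (only its degree is $0$), but this does not affect the intersection numbers you use. With the two lemmas supplied, the rest of your argument would align with the paper's proof.
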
 

The proof  substantially is similar to \cite[\S 4.4]{Iskovskikh-Prokhorov-1999}
but in the singular case some of the arguments must 
be modified. According to \cite{Namikawa-1997} and 
\cite[4.2.5]{Iskovskikh-Prokhorov-1999} the family $\mathscr C$ parametrizing non-degenerate 
conics on $X$ is not empty and  two-dimensional, and according to 
\cite[A.1.2]{Kuznetsov-Prokhorov-Shramov} conics from the family $\mathscr C$ 
cover an open subset in $X$. Further, you need the following. 

\begin{lemma} 
\label{lemma-conics} 
\begin{enumerate} 
\item \label{lemma-conics-line} 
For 
each line $l\subset X$ there are only a finite number of lines on $X$ 
meeting $l$.
\item \label{lemma-conics-1} 
Let $l$ be a line 
contained in the non-singular locus of $X$. Then it meets a 
one-dimen\-sional family of conics. 
\item \label{lemma-conics-2} 
There is a 
non-degenerate conic contained in the non-singular locus of $X$.
\end{enumerate} 
\end{lemma}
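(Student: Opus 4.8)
The three assertions concern lines and conics on the prime Fano threefold $X$ of genus $g=7$, and the plan is to follow the classical analysis for smooth prime Fano threefolds (cf.\ \cite[Ch.~4]{Iskovskikh-Prokhorov-1999}), treating the singular points of $X$ with extra care.

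For (i) I would argue by contradiction. If a line $l\subset X$ met infinitely many lines of $X$, these would form a one-parameter family $\{l_t\}$ sweeping out an irreducible surface $S\subset X$. Since $\rk\Pic(X)=1$ and $X$ is factorial, the ample generator of $\Pic(X)$ is $-K_X$ (as $\iota=1$), hence $S\sim m(-K_X)$ for some integer $m\ge 1$; moreover each ruling satisfies $(-K_X)\cdot l_t=1$, so $S$ is a surface ruled by lines, that is, a scroll over a curve, a cone, or a plane (the plane being excluded because $(-K_X)^2\cdot S=12m\ne 1$). By adjunction $K_S=(m-1)(-K_X)|_S$. For $m\ge 2$ this is ample, so $S$ would be of general type, which is impossible since $S$ is uniruled. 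For $m=1$ one gets $\omega_S\simeq\OOO_S$, and here I would use the extra information that every $l_t$ meets the fixed line $l$: either $l\subset S$, so that $l$ is a multisection of the ruling, or $l\not\subset S$, in which case every $l_t$ passes through one of the finitely many points of $l\cap S$ and $S$ is therefore a cone; a computation with the discrepancy of the vertex (respectively along the conductor curve if $S$ is non-normal), combined with the terminality and factoriality of $X$ at that point, excludes this possibility. Thus only finitely many lines meet $l$.

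For (ii) and (iii) I would use the quoted facts (\cite{Namikawa-1997}, \cite[4.2.5]{Iskovskikh-Prokhorov-1999}, \cite[A.1.2]{Kuznetsov-Prokhorov-Shramov}) that the family $\mathscr C$ of non-degenerate conics on $X$ is non-empty, of dimension $2$, and sweeps out a dense open subset $U\subseteq X$; in particular the universal conic over $\mathscr C$ is $3$-dimensional and dominates $X$, so for each $q\in X$ the conics of $\mathscr C$ through $q$ form a subfamily of dimension $\le 1$. Since $\Sing X$ is finite, the conics meeting $\Sing X$ form a proper subfamily, so a general member of $\mathscr C$ is a non-degenerate conic contained in the non-singular locus --- this is (iii). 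For (ii), with $l$ in the non-singular locus, let $\mathscr C_l\subseteq\mathscr C$ consist of the conics meeting $l$. If $\mathscr C_l$ were $0$-dimensional it would cover only finitely many points of $l$, contradicting that $l\cap U$ (a dense subset of $l$) is covered by conics of $\mathscr C$; hence $\dim\mathscr C_l\ge 1$. The reverse inequality $\dim\mathscr C_l\le 1$ follows from a dimension count on the incidence $\{(C,p):p\in C\cap l\}$ together with the finiteness of the general fibre of the universal conic (after checking that $l$ is not contained in the locus where this fibre jumps).

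The main obstacle is the singular-point analysis in part (i), namely the case $m=1$: excluding a uniruled surface $S\subset X$ with $\omega_S\simeq\OOO_S$ that is swept out by lines meeting $l$. Configurations of this kind do not arise on a smooth Fano threefold, and ruling them out here requires combining the condition that every ruling meets the fixed line $l$ with the local classification of terminal threefold singularities (using that factoriality forces $S$ to be Cartier on $X$). The remaining ingredients --- the dichotomy $N_{l/X}\simeq\OOO\oplus\OOO(-1)$ or $\OOO(1)\oplus\OOO(-2)$ for the normal bundle of a line in the non-singular locus, and the dimension and covering properties of $\mathscr C$ --- are standard or available in the cited references.
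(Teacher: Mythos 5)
There are genuine gaps, and the most serious one is in part (iii). You claim that, because the universal conic over $\mathscr C$ is $3$-dimensional and dominates $X$, the conics through any fixed point $q\in X$ form a subfamily of dimension $\le 1$, so that a general member of $\mathscr C$ avoids the finite set $\Sing X$. This does not follow: a dominant morphism of threefolds has finite \emph{general} fibre, but a special fibre can be $2$-dimensional, i.e.\ \emph{all} conics of $\mathscr C$ could a priori pass through one singular point $P$. Ruling out exactly this configuration is the real content of (iii) in the singular setting, and the paper spends the whole proof on it: assuming all conics pass through $P$, it uses (i) to choose a line $l$ such that neither $l$ nor any line meeting $l$ passes through $P$, runs the double projection from $l$ (a Sarkisov link with $f'\colon Y'\to\PP^1$ a degree-$5$ del Pezzo fibration, with $\chi\circ f^{-1}$ an isomorphism near $P$), and observes that the $1$-dimensional family of conics through $P$ meeting $l$ transforms into a $1$-dimensional family of lines through $P'$ inside a single fibre, forcing that fibre to be a cone in $\PP^5$ with vertex $P'$ --- impossible because $Y'$ has hypersurface singularities, so every tangent space has dimension $\le 4$. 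Your part (ii) has a related gap in the lower bound: $\dim\mathscr C_l\ge 1$ rests on $l\cap U\ne\emptyset$, but $l$ may lie entirely in the closed complement of the open set $U$ swept by conics. The paper instead deduces (ii) from the same link: conics meeting $l$ correspond to lines in the fibres of the degree-$5$ del Pezzo fibration $f'$ (since $-K_{Y'}\cdot C'=1$ and $E'\cdot C'\ge 1$), and these form a $1$-dimensional family.

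Part (i) is also not closed. Your surface-classification argument leaves the case $m=1$ (a uniruled surface $S\sim -K_X$ with $\omega_S\simeq\OOO_S$ swept out by lines meeting $l$) unresolved --- you acknowledge this as the main obstacle and only gesture at a discrepancy computation. The paper's argument is entirely different and two lines long: $\dim|H-2l|\ge 1$; a line $l'$ meeting $l$ but not contained in a member $D\in|H-2l|$ would give $D\cdot l'\ge 2>1=H\cdot l'$, so every such $l'$ lies in the base locus of $|H-2l|$; and since $\Pic(X)=\ZZ\cdot[H]$ two distinct members of this pencil have no common component, so the base locus is a curve and contains only finitely many lines. I would recommend replacing your approach to (i) by this, and restructuring (ii) and (iii) around the double-projection link rather than around dimension counts on $\mathscr C$.
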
 

\begin{proof} 
\ref{lemma-conics-line} follows from 
the fact that $\dim|H-2l|\ge 1$ and $\Pic (X)=\ZZ \cdot [H]$. 

\ref{lemma-conics-1} 
Consider the double projection from $l$ and the 
corresponding Sarkisov link \cite{Prokhorov-planes}. 
In the notation of \eqref{equation-cd}, the morphism $f$ is the blowup of $l$, 
$Z \simeq\PP^1$ and 
$f'$ is a del Pezzo fibration  of degree 5. The proper transform 
$C' \subset Y'$ of a conic $C \subset X$ meeting  $l$  is a line in a fiber of 
$f'$ (because $-K_{Y'} \cdot C'=1$ and $E' \cdot C'\ge 1$). There is 
at most a one-dimensional family of such lines. This proves 
\ref{lemma-conics-1}. 

\ref{lemma-conics-2} Assume that all of the conics from the 
(two-dimensional) family $\mathscr C$ pass through a point $P \in X$. 
Take line $l \subset X$ that  does not pass through $P$ and 
all the lines $l_i$ meeting $l$ also do not pass through $P$. 
Then the map $\chi \circ f^{-1}: X \dashrightarrow Y'$ is an isomorphism near $P$. 
The line $l$ intersects a one-dimensional family of conics from the family 
$\mathscr C$ and proper transforms these conics on $Y'$ are lines in the 
fiber of $f'$ passing through the point $P':=\chi (f^{-1}(P))$. 
Hence this fiber is a cone in $\PP^5$ with vertex $P'$. But since the
singularities $Y'$ are hypersurface, the dimension of the tangent space to a fiber of 
$f'$ is at most 4. The contradiction proves our lemma. 
\end{proof} 
Thus there exists a conic $C$ contained in the non-singular locus of $X$. 
Take such a conic so that it is not contained in the surface swept out 
by lines. Consider the projection of $\psi: X \dashrightarrow Y_0 \subset 
\PP^{5}$ 
from the linear span of $C$. Since  $X \cap \langle C \rangle=C$, the 
blowup $f: Y \to X$ of the conic $C$ resolves the indeterminacy points of
$\psi$ and 
defines a morphism $f_0: Y \to Y_0$. Here in the Stein factorization $Y \to \bar Y 
\to Y_0$, the morphism $\bar f: Y \to \bar Y$ is birational and is given by a  multiple
of the linear system $|-K_Y|$, i.e., $\bar f$ is crepant. 

\begin{lemma} \label{lemma-conic-projection}
The exceptional locus of $\bar f$ is one-dimensional. 
\end{lemma}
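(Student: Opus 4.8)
The plan is to show that $\bar f$ contracts no prime divisor; since $\bar f$ is birational, this is exactly the assertion $\dim\operatorname{Exc}(\bar f)\le 1$. The key observation is that $f_0\colon Y\to Y_0$ is the morphism given by the complete linear system $|f^*H-E|$, where $H=-K_X$: its members are the proper transforms of the hyperplane sections of $X$ through $\langle C\rangle$, and a hyperplane contains $C$ precisely when it contains $\langle C\rangle$, because $X\cap\langle C\rangle=C$. As $K_Y=f^*K_X+E$ for the blowup of the smooth curve $C$, we get $|f^*H-E|=|-K_Y|$, so $\bar f$ is the anticanonical contraction and $-K_Y=\bar f^{*}A$ for an ample class $A$ on $\bar Y$. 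Consequently, if a prime divisor $D\subset Y$ were contracted by $\bar f$, then $\dim\bar f(D)\le 1$ and hence $(-K_Y)^2\cdot D=0$.

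First I would rule out $D=E$. Using the standard intersection formulae for the blowup of the smooth curve $C$ (which lies in the smooth locus of $X$), namely $(f^*H)^2\cdot E=0$, $f^*H\cdot E^2=-H\cdot C=-2$ and $E^3=-\deg N_{C/X}=0$ (the last because $C$ is a conic), one finds $(-K_Y)^2\cdot E=(f^*H-E)^2\cdot E=4\ne 0$. Hence $D\ne E$, so $D=f^*(aH)-mE$, where $S:=f(D)$ is an irreducible surface with $S\in|aH|$, $a\ge 1$, and $m:=\operatorname{mult}_C S\ge 1$. The same formulae give
\[
(-K_Y)^2\cdot D=(f^*H-E)^2\cdot\bigl(f^*(aH)-mE\bigr)=10a-4m,
\]
so that $(-K_Y)^2\cdot D=0$ forces $m=\tfrac{5}{2}a$; in particular $a$ is even and $m>2a$.

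The contradiction will come from an upper bound for $m$. By \cite{Kuznetsov-Prokhorov-Shramov} the conics of the family $\mathscr C$ cover a dense open subset of $X$, so for a general point of $C$ there is a conic $B\in\mathscr C$ through it which is transversal to $C$ and not contained in $S$. Its proper transform $\hat B\subset Y$ satisfies $f^*(aH)\cdot\hat B=2a$ and $E\cdot\hat B=1$, whence
\[
0\le D\cdot\hat B=\bigl(f^*(aH)-mE\bigr)\cdot\hat B=2a-m,
\]
i.e.\ $m\le 2a$ --- contradicting $m>2a$. Thus $\bar f$ contracts no divisor and $\operatorname{Exc}(\bar f)$ is at most one-dimensional (and nonempty for the general $C$, since such a $C$ meets a line of $X$, whose proper transform is then contracted by $\bar f$). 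The step requiring the most care --- and where, as in \cite[\S 4.4]{Iskovskikh-Prokhorov-1999}, the generality of $C$ and the structure of $\mathscr C$ are used --- is the existence of the auxiliary conic $B$: one must check that not every conic through a general point of $C$ is tangent to $C$ or contained in $S$. Everything else is formal once one uses that, $C$ lying in the smooth locus of $X$, the blowup $f$ and its exceptional divisor behave exactly as in the non-singular case.
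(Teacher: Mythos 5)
Your first step coincides with the paper's: assuming $\bar f$ contracts a prime divisor $D$, you use $-K_Y=\bar f^*(-K_{\bar Y})$ to get $(-K_Y)^2\cdot D=0$, and the intersection numbers $(f^*H)^2\cdot E=0$, $f^*H\cdot E^2=-2$, $E^3=0$ give $10a-4m=0$, i.e.\ $D\equiv\alpha\bigl(2(-K_Y)-3E\bigr)$ with $a=2\alpha$, $m=5\alpha$ --- exactly the relation $D\sim\alpha(2(-K_Y)-3E)$ from which the paper starts. After that you diverge, and the divergence is where the gap sits. Your contradiction rests entirely on the existence of a conic $B\in\mathscr C$ meeting $C$, with $B\not\subset S$. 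You flag this as ``the step requiring the most care'' but do not prove it, and the covering property of $\mathscr C$ cited from \cite{Kuznetsov-Prokhorov-Shramov} does not deliver it: it only produces \emph{some} conic through a general point of $C$, which a priori could be $C$ itself (nothing rules out that the generically finite map from the incidence variety to $X$ has degree one), and even granting $B\ne C$ it gives no control on whether $B\subset S$.

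The second issue is not a technicality: the numerical class you have just computed says that $D\cdot\ell=-\alpha$ and $E\cdot\ell=2$ for the fibers $\ell$ of $D\to\bar f(D)$, i.e.\ the surface $S=f(D)$ is itself swept out by a one-dimensional family of conics each meeting $C$ at two points (this is the observation the paper makes explicitly, using that only finitely many lines meet $C$). So conics meeting $C$ and contained in $S$ exist in abundance in this hypothetical situation, and the statement you need --- that at least one conic of $\mathscr C$ meets $C$ without lying in $S$ --- is precisely the nontrivial content, not a genericity afterthought. The paper avoids this entirely by a different mechanism: it looks at the restriction $f_0|_E$ and the curve $\Lambda=f_0(D\cap E)$, computes $\deg\Lambda=\tfrac12 D\cdot E\cdot(-K_Y)=7\alpha\ge7$, and derives a contradiction from classical surface geometry (a quartic ruled surface cannot be singular along a curve of degree $\ge 7$; in the degree-two case $Y_0$ would be a cone over a twisted cubic, which contains no quadric). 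To salvage your route you would have to actually produce the auxiliary conic $B$, which seems to require input comparable to what the paper's argument already uses.
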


\begin{proof} 
Suppose that $\bar f$ contracts a divisor $D$. It 
is easy to compute (see \cite[4.1.2]{Iskovskikh-Prokhorov-1999})
that $D \sim \alpha (2 (-K_Y) -3E)$ for some $\alpha \in \ZZ_{>0}$ 
and $D \cdot E \cdot (-K_Y)=14 \alpha>0$. In particular, 
$\Lambda:=f_0 (D \cap E)$ is a curve. Since the conic $C$ meets only a finite 
number 
of lines, a general fiber of $D$ must be a conic meeting $C$ at two 
points \cite[4.4.1]{Iskovskikh-Prokhorov-1999}. If the restriction 
$f_0|_{E}: E \to f_0 (E)$ is a birational morphism, $f_0 (E)$ is a ruled surface 
of degree 4 singular along $\Lambda$, where 
\[
\deg \Lambda=\frac 12D \cdot E \cdot (-K_Y)= 7 \alpha \ge 7 
\] 
This is impossible, see e.g.
\cite[4.4.8]{Iskovskikh-Prokhorov-1999}. Therefore $f_0$ is a morphism of 
degree 2, $f_0 (E)$ is a quadric, and $X_0 \subset \PP^5$ is a subvariety of 
degree 3. Since $\rk \Cl (\bar Y)=\rk \Cl (Y_0)= 1$, the variety $Y_0$ is a cone over 
a rational twisted cubic curve with a vertex in a line. However, this cone 
contains no quadrics. The contradiction proves Lemma \ref{lemma-conic-projection}.
\end{proof} 

Further, as in \cite[\S 4.1]{Iskovskikh-Prokhorov-1999}, there exists a flop 
$\chi: Y \dashrightarrow Y'$ 
and an extremal Mori contraction $f': Y' \to Z$. By solving the corresponding 
Diophantine equations, as in \cite[ch. 4]{Iskovskikh-Prokhorov-1999} or 
\cite [\S 4]{Prokhorov-planes}, we obtain the possibilities 
\ref{theorem-conic-1}-\ref{theorem-conic-2}. Theorem \ref{theorem-conic} 
is proved. For the proof of Theorem \ref{main} in the case $g=7$, similar to 
\eqref{equation-n}, we write 
\begin{equation*} 
\label{equation-s}
s(X)=s(Y) =s(Y')=s(Z)+s(\Gamma) \le s(Z)+p_a (\Gamma). 
\end{equation*} 
If the possibility \ref{theorem-conic-1} of Theorem \ref{theorem-conic} 
occurs, 
then the variety $Z$ is smooth and therefore $s(X)\le p_a(\Gamma)=7$. 
Otherwise the curve $\Gamma$ is smooth and therefore $s(X)=s(Z)$. 
Then $s(Z)\le5$ according to the already proved case of 
Theorem \ref{main}. 

 
 \def\cprime{$'$}

\end{document}